\definecolor{myred}{RGB}{217,0,0}
\theoremstyle{plain}
\newtheorem{theorem}{Theorem}
\newtheorem{lemma}[theorem]{Lemma}
\newtheorem{definition}[]{Definition}
\newcommand{\inv}{^{-1}}
\DeclareMathOperator{\Stab}{Stab}
\DeclareMathOperator{\Orb}{Orb}
\DeclareMathOperator{\Out}{Out}
\DeclareMathOperator{\GL}{GL}
\DeclareMathOperator{\Image}{Im}
\newcommand{\Z}{\mathbb{Z}}
\newcommand{\C}{\mathbb{C}}
\title{Large Totally Symmetric Sets}
\author{Noah Caplinger}
\date{June 2022}
\begin{document}
\maketitle

\begin{abstract}
    A totally symmetric set is a subset of a group such that every permutation of the subset can be realized by conjugation in the group. The (non-)existence of large totally symmetric sets obstruct homomorphisms, so bounds on the sizes of totally symmetric sets are of particular use. In this paper, we prove that if a group has a totally symmetric set of size $k$, it must have order at least $(k+1)!$. We also show that with three exceptions, $\{(1 \; i)\mid i = 2,\ldots,n\} \subset S_n$ is the only totally symmetric set making this bound sharp; it is thus the largest totally symmetric set relative to the size of the ambient group.

    
\end{abstract}

\section{Introduction}

Kordek---Margalit \cite{Kordek-Margalit} introduced the notion of a totally symmetric set in a group as a means to study homomorphisms. Briefly, a subset $X \subset G$ of a group is totally symmetric if any permutation of $X$ can be realized by conjugation in $G$---for instance, the set of transpositions $$X_n = \{(1 \; i) \mid i = 2,\ldots,n \} \subset S_n$$ is totally symmetric. Understanding the totally symmetric sets of groups $G,H$ immediately yields constraints on homomorphisms $G\to H$, and in some cases give complete classifications. Kordek---Margalit \cite{Kordek-Margalit} classified homomorphisms $\rho:B_n' \to B_n$ with essentially this strategy: they first classify totally symmetric sets in $B_n$, then use this classification to deduce the image of a well-chosen totally symmetric set in $B_n'$. This general strategy has been used by Chen---Mukherjea \cite{Chen-Mukherjea} to classify maps from braid groups to mapping class groups, and by Scherich---Verberne \cite{Nancy-Yvon}, Caplinger---Kordek \cite{Caplinger-Kordek} and Kordek et al. \cite{Finite_Quotients} to understand finite quotients of braid groups. Classifications of totally symmetric sets and bounds on their sizes are of particular interest in this scheme, as they obstruct homomorphisms. Our two main results are directly in this vein.

\begin{theorem}
\label{thm:main}
Let $G$ be a group, and $X \subset G$ a totally symmetric set of cardinality $k>3$. Then $|G| \geq (k+1)!$. If $|G| = (k+1)!$, then $G \cong S_{k+1}$. 
\end{theorem}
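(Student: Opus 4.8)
The plan is to read off, from the conjugation action, the two subgroups $N=\{g\in G: gXg^{-1}=X\}$ (the setwise stabilizer of $X$) and $Z=\{g\in G: gx_ig^{-1}=x_i \text{ for all } i\}$ (its pointwise stabilizer, i.e.\ the centralizer of $X$). Total symmetry says precisely that the natural map $N\to\mathrm{Sym}(X)\cong S_k$ is surjective with kernel $Z$, so $N/Z\cong S_k$ and hence $|N|=k!\,|Z|$. Writing $|G|=[G:N]\cdot|N|=[G:N]\cdot k!\cdot|Z|$ reduces the bound $|G|\ge(k+1)!$ to the single inequality $[G:N]\cdot|Z|\ge k+1$; since the left-hand side is a positive integer, it suffices to rule out the values $1,\dots,k$.

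First I would exploit that $N/Z\cong S_k$ acts $k$-transitively, in particular $2$-transitively, on $X$: whether two distinct elements of $X$ commute is preserved by this action, so either every pair $x_i,x_j$ commutes or no pair does. In the commuting case each $x_j$ centralizes all of $X$, so $X\subseteq Z$; moreover no $x_i$ equals $e$ (conjugation fixes $e$ while a totally symmetric set of size $>1$ must map any chosen element to a different one), so $\{e,x_1,\dots,x_k\}$ consists of $k+1$ distinct elements of $Z$. Thus $|Z|\ge k+1$ and $|G|\ge k!\,|Z|\ge(k+1)!$, settling this case cheaply.

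The substantial case is when no two elements of $X$ commute. Here I would aim to produce $k+1$ distinct conjugates of the \emph{set} $X$, namely $X$ together with the $x_iXx_i^{-1}$ for $i=1,\dots,k$; distinctness is equivalent to $x_i\notin N$ and $x_i^{-1}x_j\notin N$ for $i\ne j$, and it yields $[G:N]\ge k+1$. In the model case $G=S_{k+1}$ with $X$ the transpositions through a fixed point, these $k+1$ conjugates are exactly the stars of transpositions through the $k+1$ points, so this is the right target. The hard part is precisely this distinctness: \emph{a priori} $x_1$ could normalize $X$, and the non-commuting hypothesis only forces the induced permutation of $X$ to fix the index $1$ and derange the rest. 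To exclude this I would use that $\langle X\rangle$ would then be normalized by $N$, making its image a nontrivial normal subgroup of $S_k\cong N/Z$, and play this off against the rigid ``each $x_i$ fixes exactly index $i$'' pattern; it is here that the hypothesis $k>3$ must enter, ruling out the sporadic small-order coincidences (the $k=3$ phenomenon behind the $(\Z/2)^2$ example). I expect this step to be the main obstacle of the entire argument.

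Finally, for the equality statement, $|G|=(k+1)!$ forces $[G:N]\cdot|Z|=k+1$. The commuting case cannot achieve this for $k>3$, since it would require a group $Z$ of order $k+1$ admitting $S_k$ as symmetries permuting its $k$ non-identity elements arbitrarily, which forces $k\le 3$. So equality occurs only in the non-commuting case, where the previous step gives $[G:N]=k+1$ and hence $|Z|=1$, i.e.\ $N\cong S_k$. The conjugation action on the $k+1$ conjugates of $X$ is a homomorphism $\phi\colon G\to S_{k+1}$ whose restriction to $N$ is the standard faithful action of $S_k$ on the $k$ conjugates $x_iXx_i^{-1}$; therefore $\ker\phi\le N$ is trivial, $\phi$ is injective, and $|\phi(G)|=(k+1)!=|S_{k+1}|$ forces $G\cong S_{k+1}$.
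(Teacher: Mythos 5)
Your reduction $|G| = [G:N]\cdot k!\cdot|Z|$ and the commute-or-not dichotomy are sound, and your commuting case is correct and even more self-contained than the paper's (which invokes an external bound $k!\cdot 2^{k-1}$ for pairwise-commuting totally symmetric sets, whereas your observation $X\cup\{e\}\subseteq Z$ gives $|Z|\geq k+1$ at once). But the non-commuting case has a genuine gap, exactly where you flagged it. Distinctness of $X, x_1Xx_1^{-1},\ldots,x_kXx_k^{-1}$ splits into two claims: (i) $x_i\notin N$, and (ii) $x_j^{-1}x_i\notin N$ for $i\neq j$. Your sketch gestures only at (i), and the gesture does not close: if $x_1\in N$ then indeed all $x_i\in N$ and $\phi(\langle X\rangle)$ is a nontrivial normal subgroup of $S_k$, hence contains $A_k$ for $k\geq 5$; but containing $A_k$ is in no tension with the pattern that each $\phi(x_i)$ fixes exactly the index $i$, since $A_k$ has plenty of such permutations. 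Claim (ii) is not addressed at all: nothing you wrote rules out $x_iXx_i^{-1}=x_jXx_j^{-1}$ with both different from $X$. Since the inequality $[G:N]\geq k+1$ rests entirely on this distinctness, the central bound of the theorem is unproven as written.

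For comparison, here is how the paper closes both halves, which shows how to repair your argument. For (i): if $x_1\in N$ then all $x_i\in N$ and $X$ is a full conjugacy class of $N=\Stab(X)$; since $\phi:N\to S_k$ is surjective, $\phi(X)$ is then a full conjugacy class of $S_k$, it is nontrivial (each $\phi(x_i)$ moves every index $j\neq i$, because no two elements of $X$ commute), every one of its elements fixes a point, and it has at most $|X|=k$ elements. But for $k>3$, every nontrivial conjugacy class of $S_k$ whose elements fix a point has more than $k$ elements --- this counting step is precisely where $k>3$ enters, and it is what fails for the class $\{(1\:2)(3\:4),(1\:3)(2\:4),(1\:4)(2\:3)\}$ in $S_4$, whose elements fix no point. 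For (ii), the paper sidesteps your difficulty entirely: instead of exhibiting specific pairwise-distinct conjugates, it takes any one conjugate $Y\neq X$ with $X\cap Y\neq\emptyset$ (for instance $Y=x_1Xx_1^{-1}$, which contains $x_1$, once (i) is settled) and uses total symmetry to choose, for each subset $A\subset X$ with $|A|=|X\cap Y|$, an element $g_A\in\Stab(X)$ carrying $X\cap Y$ onto $A$. The conjugates $g_AYg_A^{-1}$ are pairwise distinct for free, because they meet $X$ in different subsets, and since $1\leq|X\cap Y|\leq k-1$ there are $\binom{k}{|X\cap Y|}\geq k$ of them, none equal to $X$. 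Distinctness is extracted from intersections with $X$ rather than from group-theoretic constraints on the $x_i$, which is exactly the leverage your version lacks. With (i) repaired by the conjugacy-class count and your list of conjugates replaced by this transport argument, the rest of your proof --- including your equality-case analysis, which matches the paper's --- goes through.
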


This result should be compared to \cite[Proposition 2.2]{Finite_Quotients} which gives the bound $|G|\geq k! \cdot 2^{k-1}$ under the additional hypothesis that elements of $X$ pairwise commute. The totally symmetric set $X_n = \{(1 \; i) \mid i = 2,\ldots, n\}$ shows that the bound in \Cref{thm:main} is sharp. Our next theorem shows that $X_n$ is the only such example (with three exceptions for small $n$).

\begin{theorem}
\label{thm:classification}
Let $Y = \{y_1,\ldots,y_k\} $ be a totally symmetric set in $S_n$ of cardinality $k$.

\begin{enumerate}
    \item If $n \not \in \{3,4,6\}$ and $k = n-1$, then $Y$ is conjugate to $X_n$.
    \item If $n = 6$ and $k = 5$, then $Y$ is conjugate to either $X_6$ or $\rho(X_6)$ where $\rho \in \Out(S_6)$ is non-trivial.
    \item If $n = 4$ and $k = 3$, then $Y$ is either conjugate to $X_4$, conjugate to $\{(1\:2), (1\:3), (2\:3)\}$ or equal to $\{(1 \: 2)(3\:4),(1\:3)(2\:4),(1\:4)(2\:3)\}$.
    \item If $n = 3$, then $Y$ may be any subset of any conjugacy class of $S_3$. In particular $k \leq 3$, and equality is realized by $\{(1\:2), (1\:3), (2\:3)\}$.
\end{enumerate}
\end{theorem}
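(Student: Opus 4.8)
The plan is to treat the extremal case $k=n-1$ (which by \Cref{thm:main} is the largest possible size once $k>3$) as an instance of the equality clause, and to reconstruct $Y$ from the conjugation action of $S_n$ on the set $\Omega$ of conjugates of $Y$. Throughout, write $N=\Stab(Y)$ for the setwise stabilizer of $Y$ under conjugation and $Z=C_{S_n}(Y)$ for its pointwise stabilizer, so that $N/Z\cong S_k$ and $|\Omega|=[S_n:N]$. Since $|S_n|=[S_n:N]\cdot|N|=|\Omega|\cdot k!\cdot|Z|$ and $k=n-1$, one gets the basic identity $|\Omega|\cdot|Z|=n$. The first task is to show that, for $n\ge 5$, this forces $|\Omega|=n$ and $Z=1$.

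To do this I would first rule out the possibility that the $y_i$ pairwise commute: a commuting totally symmetric set of size $k$ forces $|S_n|\ge k!\,2^{k-1}$ by \cite[Proposition 2.2]{Finite_Quotients}, and $n!\ge (n-1)!\,2^{n-2}$ fails for $n\ge5$. Next, observe that $\{\,i:y_i\in N\,\}$ is either empty or everything (if one $y_i$ normalizes $Y$, conjugating by elements of $N$ that realize transpositions shows every $y_j$ does). The ``everything'' case makes conjugation by each $y_i$ a permutation of $Y$; equivariance under the $S_k$-action together with the maximality of $S_{k-1}$ in $S_k$ forces this permutation to be trivial, i.e.\ $Y$ commutes---already excluded. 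Hence each $Y_i:=y_iYy_i\inv$ is distinct from $Y$, and the same equivariance/maximality argument shows the $Y_i$ are pairwise distinct (otherwise they would all coincide with a single $Y'$, but $y_i\in Y_i=Y'$ for all $i$ would give $Y\subseteq Y'$ and hence $Y'=Y$, a contradiction). Thus $\Omega\supseteq\{Y,Y_1,\dots,Y_{n-1}\}$ has at least $n$ elements, and with $|\Omega|\cdot|Z|=n$ we conclude $|\Omega|=n$, $Z=1$, and $N\cong S_{n-1}$.

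With this in hand the conjugation action gives a homomorphism $\Phi\colon S_n\to\operatorname{Sym}(\Omega)$; its kernel is the normal core of $N$, which is trivial since $A_n\not\le N$, so $\Phi$ is an isomorphism onto $\operatorname{Sym}(\Omega)\cong S_n$. Labelling $\Omega$ so that $Y\leftrightarrow 0$ and $Y_j\leftrightarrow j$, the stabilizer $N$ becomes $\Stab(0)$ and each $\bar y_j:=\Phi(y_j)$ satisfies $\bar y_j(0)=j$. Because $\Phi$ is equivariant, $\bar y_1$ commutes with the copy of $S_{n-2}$ fixing $0$ and $1$; for $n-2\ge 3$ the centralizer of this subgroup is $\{1,(0\,1)\}$, so $\bar y_1=(0\,1)$, and by symmetry $\bar y_j=(0\,j)$. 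Therefore $\Phi(Y)=\{(0\,j):j=1,\dots,n-1\}$ is the standard transposition set, so $Y=\alpha(X_n)$ for some $\alpha\in\Aut(S_n)$. When $n\notin\{3,4,6\}$ every automorphism is inner and we get $Y$ conjugate to $X_n$; when $n=6$ the nontrivial class in $\Out(S_6)$ yields the second possibility $\rho(X_6)$. This disposes of parts (1) and (2).

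The remaining cases are exactly where the above breaks down. For $n=4$ the centralizer computation loosens (the relevant subgroup is $S_2$, whose centralizer is larger), and the identity $|\Omega|\cdot|Z|=4$ now admits several solutions; I would finish by directly enumerating the size-$3$ totally symmetric subsets of the conjugacy classes of $S_4$, which produces precisely the star $X_4$, the triangle $\{(1\,2),(1\,3),(2\,3)\}$, and the normal Klein set $\{(1\,2)(3\,4),(1\,3)(2\,4),(1\,4)(2\,3)\}$. For $n=3$ one checks by inspection that $S_3$ acts as the full symmetric group on each of its conjugacy classes, so every subset of a class is totally symmetric and $k\le 3$. I expect the main obstacle to be the first task---pinning down $|\Omega|=n$ and $Z=1$---since this is where the commuting case must be excluded and the distinctness of the conjugates $Y_i$ established; once the action is known to be the standard degree-$n$ action, the transposition identification and the $\Aut(S_n)$ bookkeeping are routine, and the low-degree cases are finite checks.
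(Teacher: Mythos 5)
Your proposal is correct, and in the generic case $n\ge 5$ it takes a genuinely different route from the paper. The paper works with the stabilizer: counting gives $|S_n:\Stab(Y)|\le n$, the fact that proper subgroups other than $A_n$ have index at least $n$ forces $|S_n:\Stab(Y)|=n$, and then the classification of index-$n$ subgroups of $S_n$ (point stabilizers, plus one extra conjugacy class when $n=6$) is cited from Dixon--Mortimer; once $\Stab(Y)$ is a point stabilizer, each $y_i$ is pinned down by an explicit computation (the values $a_i:=y_i(n)$ are shown to exhaust $\{1,\dots,n-1\}$, and two conjugation tricks force $y_i=(i\;n)$). You work with the orbit instead: you exhibit $n$ distinct conjugates of $Y$ directly (the sets $y_iYy_i\inv$, pairwise distinct by \Cref{colImpCol} and distinct from $Y$ once the commuting case is excluded), combine this with the refined orbit--stabilizer identity $|\Orb(Y)|\cdot|C_{S_n}(Y)|=n$ to conclude that conjugation on $\Orb(Y)$ is a faithful transitive action of degree $n$, identify it with the standard action of $S_n$ (trivial normal core plus an order count), and read off $\Phi(y_j)=(0\;j)$ from a centralizer computation; the final bookkeeping then rests on $\Out(S_n)=1$ for $n\ne 6$ together with $|\Out(S_6)|\cong\Z/2\Z$. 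The two black boxes---classification of index-$n$ subgroups versus computation of $\Out(S_n)$---are classical facts of comparable depth, so neither route is more elementary; yours is more structural and reuses the orbit-counting idea of \Cref{thm:main}, while the paper's is more hands-on at the level of explicit permutations.

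Two caveats, both minor. First, your appeal to ``maximality of $S_{k-1}$ in $S_k$'' in the everything-case is not the operative fact: what you need (and do use correctly later when identifying $\bar y_1$) is that $\pi_i:=\phi(y_i)$ fixes the index $i$ and, by equivariance and surjectivity of $\phi$, is centralized by $\Stab_{S_k}(i)$; hence $\pi_i$ lies in the center of $S_{k-1}$, which is trivial for $k\ge 4$, so conjugation by each $y_i$ acts trivially on $Y$ and the $y_i$ pairwise commute. Second, for $n=4$ you only assert the outcome of the enumeration; that finite check is where the paper spends roughly half of its proof (ruling out triples of $3$-cycles and of $4$-cycles requires short but genuine centralizer arguments), so as written your part (3) is a correct claim rather than a proof, though nothing there threatens the main argument.
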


Both the braid group $B_n$ and the general linear group $\GL_n(\C)$ have similarly rigid maximal totally symmetric sets (see \cite[Lemma 2.6]{Kordek-Margalit} and \cite[Theorem B]{Caplinger-Salter} respectively). This is not a completely general phenomenon---$\Z_2 \times S_n$ contains two non-conjugate maximal totally symmetric sets---but it raises the question of what general properties lead to such rigidity. This question can also be asked about ``totally symmetric" objects not in groups (see \Cref{actionDef}). The totally symmetric multicurves of \cite{Kordek-Margalit} and the totally symmetric arrangements of \cite{Caplinger-Salter} both exhibit similar rigidity properties, and are both crucial in the proofs of their corresponding rigidity theorems.

As a sample application of \Cref{thm:classification}, we use this result to give a short, conceptually simple proof of the well-known classification of homomorphisms $S_n \to S_m$ for $n \geq m$ due to Hölder \cite{Holder}. The basic idea is that \Cref{thm:classification} determines all possible images of $f(X_n)$. 

\begin{theorem}[Hölder]
\label{corr}
Let $n \geq m > 2$ and $f:S_n \to S_m$ be a homomorphism. Then

\begin{enumerate}
     \item \label{smaller_then_cyclic}If $n > m$ and $(n,m) \neq (4,3)$, then $\Image(f)$ is cyclic. 
    \item \label{equal_then_inner}If $n = m \not \in \{4,6\}$ and $\Image(f)$ is non-cyclic, then $f$ is an inner automorphism.
    \item \label{out} If $n = m = 6$ and $\Image(f)$ is non-cyclic, then $f$ is an automorphism. Furthermore, $\Out(S_6) \cong \Z /2\Z$.
    \item \label{exceptional} If $(n,m) = (4,3)$ and $\Image(f)$ is non-cyclic, then $f$ is conjugate to the exceptional map $g: S_4 \to S_3$ defined by $g(1 \: 4) = (1 \: 2)$, $g(2 \: 4) = (1 \: 3)$ and $g(3 \: 4) = (2 \: 3)$.
    \item \label{exceptional4} If $n = m = 4$ and $\Image(f)$ is non-cyclic, then $f$ is either an inner automorphism or conjugate to the exceptional map above composed with the inclusion $S_3 \to S_4$.
\end{enumerate}

\end{theorem}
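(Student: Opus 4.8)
The plan is to run every case through a single mechanism: a homomorphism carries a totally symmetric set to either a single point or a totally symmetric set of the same size. First I would record this observation. If $f\colon G\to H$ is a homomorphism and $X=\{x_1,\dots,x_k\}$ is totally symmetric, then either all the $f(x_i)$ coincide or they are pairwise distinct—for if $f(x_i)=f(x_j)$ with $i\neq j$, total symmetry lets me conjugate the pair $(i,j)$ to any pair $(a,b)$, forcing $f(x_a)=f(x_b)$ throughout—and in the distinct case a conjugator realizing a permutation of $X$ maps under $f$ to one realizing the corresponding permutation of $f(X)$, so $f(X)$ is again totally symmetric. Applying this to $f\colon S_n\to S_m$ with $X=X_n$: since $X_n$ generates $S_n$, the collapsing case makes $\Image(f)=\langle f((1\;2))\rangle$ cyclic. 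Thus whenever $\Image(f)$ is non-cyclic, $f(X_n)$ is a genuine totally symmetric set of size $n-1$ in $S_m$, and this is the situation I would analyze for the rest of the proof. For \Cref{smaller_then_cyclic}, suppose $\Image(f)$ is non-cyclic with $n>m>2$ and $(n,m)\neq(4,3)$; since $m\geq 3$ and $n=4$ would force the excluded pair $(4,3)$, this gives $n\geq 5$, so $n-1>3$ and \Cref{thm:main} applies to $f(X_n)$, yielding $m!\geq n!$ and hence $m\geq n$, contradicting $n>m$. So the image is cyclic.

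The equal-rank cases rest on a rigidity step I would prove next: if $g\colon S_n\to S_n$ is a homomorphism with $g(X_n)=X_n$ setwise, then $g$ is inner. Indeed $g$ permutes $X_n$ by some permutation $\pi$, total symmetry of $X_n$ inside $S_n$ supplies $w$ with $c_w$ realizing $\pi$, and then $g$ and $c_w$ agree on the generating set $X_n$, so $g=c_w$. For \Cref{equal_then_inner} with $n=m\geq 5$ and $n\neq 6$, case (1) of \Cref{thm:classification} makes $f(X_n)$ conjugate to $X_n$, so $c_h\circ f$ maps $X_n$ to itself and is inner, whence $f$ is inner; the value $n=3$ I would dispatch by hand, since a non-cyclic subgroup of $S_3$ is all of $S_3$ and $\Aut(S_3)=\mathrm{Inn}(S_3)$.

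For \Cref{out}, case (2) of \Cref{thm:classification} makes $f(X_6)$ conjugate to $X_6$ or to $\rho(X_6)$; composing $f$ with $c_h$, respectively $\rho\inv c_h$, maps $X_6$ back into $X_6$, so the composite is inner by the rigidity step and $f$ is an automorphism. To pin down $\Out(S_6)\cong\Z/2\Z$, I would let $\Aut(S_6)$ act on the set of conjugacy classes of size-$5$ totally symmetric sets, which has exactly two elements by case (2); the stabilizer of $[X_6]$ is precisely $\mathrm{Inn}(S_6)$ by the rigidity step (if $\phi$ fixes $[X_6]$, then $c_h\phi$ fixes $X_6$ setwise for suitable $h$, so $\phi$ is inner), hence the orbit has size $[\Aut:\mathrm{Inn}]=|\Out(S_6)|\leq 2$, and since $\rho(X_6)$ is not conjugate to $X_6$ the index is exactly $2$.

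Finally the exceptional maps. For \Cref{exceptional}, $f(X_4)$ is a size-$3$ totally symmetric set in $S_3$, which by case (4) of \Cref{thm:classification} can only be $\{(1\;2),(1\;3),(2\;3)\}$; after conjugating in $S_3$ to match the bijection, $f$ agrees with $g$ on the generators $X_4$ and so equals $g$. For \Cref{exceptional4}, case (3) of \Cref{thm:classification} leaves three options for $f(X_4)$: conjugate to $X_4$ gives $f$ inner as above; the set $\{(1\;2),(1\;3),(2\;3)\}$ generates a point stabilizer $S_3\subset S_4$, so $f$ factors as $S_4\twoheadrightarrow S_3\hookrightarrow S_4$ and is conjugate to $g$ followed by the inclusion; and the double-transposition set generates the abelian Klein four group $V$, which is impossible because a homomorphism into $V$ factors through $S_4^{\mathrm{ab}}=\Z/2\Z$ and so cannot have image of order $4$. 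Granting \Cref{thm:main} and \Cref{thm:classification}, I expect the only genuinely delicate point to be the last step of the $n=6$ analysis, where I must upgrade ``$\rho$ is outer'' to ``$|\Out(S_6)|=2$'' using the exact count of two conjugacy classes of totally symmetric sets together with the stabilizer computation.
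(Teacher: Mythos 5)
Your proposal is correct and follows essentially the same route as the paper: collision-implies-collapse pins $f(X_n)$ down to a totally symmetric set of size $n-1$ (or a point, giving cyclic image), the size bound of \Cref{thm:main} rules out part \ref{smaller_then_cyclic}, and the conjugate-then-match-on-generators rigidity argument, which is exactly the paper's proof of part \ref{equal_then_inner}, finishes the remaining cases. The paper writes out only parts \ref{smaller_then_cyclic} and \ref{equal_then_inner} and declares the rest ``similar''; your completions of parts \ref{out}--\ref{exceptional4}---including the orbit--stabilizer count giving $|\Out(S_6)| \leq 2$, the abelianization argument excluding the Klein-four image in $S_4$, and the $n=3$ case of part \ref{equal_then_inner} that the paper's stated proof technically skips---are correct and in the same spirit.
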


\subsection{Acknowledgments} 

The author would like to thank Dan Margalit and Dan Minahan for their suggestion to push \Cref{thm:main} farther than a simple bound. He is also grateful to Dan Margalit for his helpful comments.

\section{Totally Symmetric Sets}

Informally, a totally symmetric set is a subset $Y \subset G$ of a group such that every permutation of $Y$ can be realized by conjugation in $G$---the conjugation action contains every symmetry of $Y$. 

\begin{definition}[Totally symmetric set of a group]
Let $G$ be a group. A subset $Y = \{y_1,\ldots, y_k\} \subset G$ is said to be totally symmetric if for every $\sigma \in S_k$, there is some $g_\sigma \in G$ such that $g_\sigma y_i g_\sigma\inv = y_{\sigma(i)}.$
\end{definition}

The definition of total symmetry given by Kordek-Margalit \cite{Kordek-Margalit} made the additional constraint that the elements pairwise commute. In \cite{Caplinger-Salter}, Salter and the author generalized this definition to arbitrary $G$-sets and in particular required the use of non-commuting totally symmetric sets. We do not require the full apparatus of the definition from \cite{Caplinger-Salter}, but we will make use of this broader notion of total symmetry.

\begin{definition}[General totally symmetric set]
\label{actionDef}
Let $Z$ be a $G$-set. A subset $Y = \{y_1,\ldots, y_k\} \subset Z$ is said to be totally symmetric if for every $\sigma \in S_k$, there is a $g_\sigma \in G$ such that $g_\sigma \cdot y_i = y_{\sigma(i)}$.
\end{definition}

This group action perspective will be quite useful in our analysis. The stabilizer of a totally symmetric set (under the induced action on subsets) will play a central role in the proofs of Theorems \ref{thm:main} and \ref{thm:classification}. In fact, \Cref{actionDef} can be reformulated in terms of the stabilizer: a subset $Y \subset Z$ is totally symmetric if and only if the natural map $\Stab(Y) \to S_Y$ is surjective.

The utility of totally symmetric sets stems from ``fundamental lemma" of \cite{Kordek-Margalit}, which we now state in the language of \Cref{actionDef}.

\begin{lemma}[Collision implies collapse]
\label{colImpCol}
Let $G$ be a group, $Z_1,Z_2$ be $G$-sets, and $Y = \{y_1,\ldots,y_k\}\subset Z_1$ be a totally symmetric set. Let $f:Z_1 \to Z_2$ be a $G$-equivariant map. Then either $|f(Y)| = |Y|$ or $|f(Y)| = 1$. Furthermore, $f(Y)$ is totally symmetric.
\end{lemma}

\begin{proof}
For $|Y| \leq 2$, the result is clear. Say $|Y| > 2$ and $f(y_i) = f(y_j)$ for distinct $y_i,y_j \in Y$. For every $y_m \in Y$ distinct from $y_i,y_j$, there is some $g_{(j \: m)} \in G$ realizing the transposition $(j \: m)$ on $Y$. Then 

$$f(y_i) = f(g_{(j \: m)}\cdot y_i) = g_{(j \: m)}\cdot f(y_i)= g_{(j \: m)} \cdot f( y_j) = f(g_{(j \: m)}\cdot y_j) = f(y_{m}).$$

Any singleton is vacuously totally symmetric. If $|f(Y)| = |Y|$, then $g_\sigma\cdot f(y_i) = f(y_{\sigma(i)})$, so $f(Y)$ is also totally symmetric. 
\end{proof}

When $Z_1,Z_2$ are groups under the action of conjugation and $f$ is a homomorphism, we recover \cite[Lemma 2.1]{Kordek-Margalit}, which states that if $f:G \to H$ is a homomorphism, and $X\subset G$ is totally symmetric, then $f(X)$ is also totally symmetric and has cardinality $1$ or $|f(X)|$. This is the primary way totally symmetric sets are used to study homomorphisms. As a sample application, we will prove \Cref{corr} parts \ref{smaller_then_cyclic} and \ref{equal_then_inner} assuming Theorem \ref{thm:classification}. Conceptually, this proof is quite simple---in order to find all maps $f:S_n \to S_m$, we need only find the possible images of $X_n$, which are listed in \Cref{thm:classification}. In part  \ref{smaller_then_cyclic}, there are no suitable images, and in part \ref{equal_then_inner} there is only one.

\begin{proof}[Proof \Cref{corr}]

Let $n \geq m > 2$ be integers and $f:S_n \to S_m$ be a homomorphism. We prove only parts \ref{smaller_then_cyclic} (if $n > m$, then $\Image(f)$ is cyclic) and \ref{equal_then_inner} (if $n = m \not \in \{4,6\}$ and $\Image(f)$ is non-cyclic, then $f$ is an inner automorphism). Parts \ref{out}, \ref{exceptional} and \ref{exceptional4} are similar. 

Consider the totally symmetric set $f(X_n)$, which has cardinality $1$ or $n-1$ by \Cref{colImpCol}.  If $n > m$ with $(n,m) \neq (4,3)$, Theorem \ref{thm:classification} says that $S_m$ has no totally symmetric sets of size $n-1$. Then $|f(X_n)| = 1$, which implies that $\Image(f)$ is cyclic. We will proceed assuming $\Image(f)$ is non-cyclic. Then $f(X_n)$ has cardinality $n-1$ and therefore must be one of the totally symmetric sets listed in \Cref{thm:classification}.

If $n \not \in \{3,4,6\}$ and $n = m$, then \Cref{thm:classification} gives some $\sigma \in S_n$ so that $f(X_n) = \tilde{\sigma} (X_n)$, where $\tilde{\sigma}$ is the inner automorphism corresponding to $\sigma$. Then $(\tilde{\sigma}\inv \circ f)(X_n) = X_n$, that is $\tilde{\sigma}\inv \circ f$ permutes $X_n$. Total symmetry now gives an element $\tau \in S_n$ realizing this permutation so that $\tilde{\tau}\inv \circ \tilde{\sigma}\inv \circ f$ is the identity map on $X_n$. Then $f = \widetilde{\sigma \tau}$ is an inner automorphism as desired.
\end{proof}

\section{Proof of \Cref{thm:main}}

Let $X$ be a totally symmetric set of cardinality $k$ in a group $G$. The proof of \Cref{thm:main} is essentially an orbit-stabilizer argument applied to the action of conjugation on totally symmetric sets.

\begin{proof}[Proof of \Cref{thm:main}] By total symmetry, the natural map $\phi:\Stab(X) \to S_k$ is surjective, and therefore $|\Stab(X)| \geq k!$. It remains to show $|\Orb(X)| \geq k+1$. This is accomplished in two stages: first, we argue that some $Y \in \Orb(X)$ intersects $X$ non-trivially, then we use this intersection to produce $k$ additional totally symmetric sets in $\Orb(X)$.

Assume every $Y \in \Orb(X)$ is disjoint from $X$. In particular, any $a \in X$ satisfies $aXa\inv = X$, meaning $X \subset \Stab(X)$ is a totally symmetric set in $\Stab(X)$. Moreover, $X$ is an entire conjugacy class of $\Stab(X)$. If any $a \in X$ satisfies $\phi(a) = e$, then the elements of $X$ pairwise commute, and we may apply \cite[Proposition 2.2]{Finite_Quotients} which gives the bound $|G| \geq k!\cdot 2^{k-1}>(k+1)!$ for any totally symmetric set with pairwise commuting elements. Then we may additionally assume $\phi(X) \neq \{e\}$. Because $\phi:\Stab(X) \to S_k$ is surjective, $\phi(X)$ is also an entire conjugacy class of $S_k$ consisting of elements fixing at least one point. For $k > 3$, such conjugacy classes in $S_k$ have cardinality larger than $k$. This contradicts $|X| = k$.

Let $Y \in \Orb(X)$ intersect $X$ non-trivially. we can use total symmetry to produce $\binom{k}{|X \cap Y|} \geq k$ other elements of $\Orb(X)$ as follows. For each $|X\cap Y|$-element subset $A \subset X$, let $g_A \in \Stab(X)$ be such that $g_A(X \cap Y)g_A\inv = A$. Then the $g_AYg_A\inv$ are all distinct, as they have different intersections with $X$. This proves $|G|\geq (k+1)!$.

\vspace{.3cm}

We will now prove the second part of \Cref{thm:main}, which states that $S_{k+1}$ is the only group $G$ of order $(k+1)!$ with a totally symmetric set $X$ of cardinality $k > 3$. The basic strategy is to produce an action of $G$ on a $(k+1)$-element set which is isomorphic to the action of $S_{k+1}$.

From the proof of part 1, the equality $|G| = (k+1)!$ is achieved exactly when $\phi:\Stab(X) \to S_k$ is an isomorphism and $|\Orb(X)| = k+1$. In this case, every $Y \in \Orb(X)$ with $Y \neq X$ satisfies $|X \cap Y| = 1$ or $|X \cap Y| = k+1$. We deal with the case $|X \cap Y| = 1$; the other case is nearly identical. 

Write $X = \{x_1,\ldots, x_k\}$, let $f:G \to S_{\Orb(X)} \cong S_{k+1}$ be the action of $G$ on $\Orb(X)$ and let $Y_i \in \Orb(X)$ denote the unique totally symmetric set satisfying $X \cap Y_i = \{x_i\}$. Then $\Orb(X) = \{X, Y_1,\ldots Y_k\}$. Furthermore, $S_k \cong \Stab(X) \subset G$ acts on $\{Y_1,\ldots, Y_k\}$ by permuting indices. Any $g \not \in \Stab(X)$ does not fix $X$, so $f(\Stab(X))$ and $f(g)$ generate $S_{k+1}$. \end{proof}

\section{Proof of \Cref{thm:classification}}

Let $Y = \{y_1,\ldots, y_k\} \subset S_{n}$ be a totally symmetric set. In this section, we will prove that if $k$ takes the largest value allowed by \Cref{thm:main}, then it must be one of the totally symmetric sets listed in \Cref{thm:classification}. The case $n=3$ is dealt with by noting that any subset of any conjugacy class of $S_3$ is totally symmetric. The cases $n = 4$ and $n \geq 5$ will be treated separately. The proof requires the following two facts about permutation groups, both of which can be found in \cite[Chapter 5, Section 2]{dixon2012permutation}. 

\begin{enumerate}
    \item Any proper subgroup $H \subset S_n$ not equal to $A_n$ satisfies $|S_n: H| \geq n$.
    \item For $n \neq 6$, every index $n$ subgroup of $S_n$ is a point stabilizer (that is, a subgroup $S_{n-1} \subset S_n$ fixing a point in $[n] = \{1,\ldots, n\}$). If $n = 6$, there is one additional conjugacy class of point stabilizers found by applying an outer automorphism to a point stabilizer.
\end{enumerate}

\subsection{The generic case: \boldmath{$n \geq 5$}.}

We first claim that $\Stab(Y)$ is a point stabilizer or else $n=6$ and $\Stab(Y)$ is the image of a point stabilizer under an outer automorphism of $S_6$. Since $Y$ is totally symmetric, the natural map $\Stab(Y) \to S_k \cong S_{n-1}$ is surjective. Counting orders shows that $|S_n:\Stab(Y)| \leq n$. Then $\Stab(Y)$ is a proper subgroup of $S_n$ not equal to $A_n$, meaning $|S_n : \Stab(Y)| \geq n$ and therefore $|S_n : \Stab(Y)| = n$. We now invoke the fact (see \cite[Chapter 5, Section 2]{dixon2012permutation}) that all index $n$ subgroups of $S_n$ are point stabilizers when $n \neq 6$ and when $n=6$, there is an additional conjugacy class of $n$ subgroup. This proves the claim.

We now show that if $\Stab(Y)$ is a point stabilizer, then $Y$ is conjugate to $X_n = \{(1 \; i) \mid i = 2,\ldots,n \}$. For future notational simplicity, we will actually show $Y$ is conjugate to $\{(i \; n) \mid i = 1,\ldots,n-1 \}$. This suffices even when $n=6$---if $\Stab(Y)$ is not a point stabilizer, then let $[\rho] \in \Out(S_6)$ and consider the totally symmetric set $\rho(Y)$ and its stabilizer $\Stab(\rho(Y)) = \rho(\Stab(Y))$.

Without loss of generality, assume $\Stab(Y)$ fixes the point $n \in [n]$. Set $a_i = y_i(n)$, and for $\sigma \in S_{Y}$, let $g_\sigma \in \Stab(Y)$ be such that $g_\sigma y_i g_\sigma\inv = y_{\sigma (i)}$. Then
\begin{equation}
\label{TSnumbers}
g_\sigma (a_i) = g_\sigma y_i(n) = g_\sigma y_i g_\sigma\inv(n)  = y_{\sigma (i)}(n) = a_{\sigma (i)}. 
\end{equation}

In other words, the set $\{a_i \mid i \in [k]\} \subset [n]$ is totally symmetric in the sense of \Cref{actionDef}. The association $y_i \to a_i$ is moreover $\Stab(Y)$-equivariant.

We next claim that $\{a_i \mid i \in [k]\} = \{1,\ldots, n-1\}$. If any $a_i$ is equal to $n$, then every $a_i$ is equal to $n$ by \Cref{TSnumbers} (recall every $g_\sigma \in \Stab(Y)$ fixes $n$). If $a_i = a_j$ for distinct $i,j \in [k]$, then \Cref{colImpCol} tells us $\{a_m \mid m \in [k]\}$ is the singleton $ \{a_i\}$. \Cref{TSnumbers} now says, $\Stab(Y)$ fixes $a_i \neq n$ in addition to $n$. Then $\Stab(Y)$ has order at most $(n-2)!$ and therefore cannot surject onto $S_k \cong S_{n-1}$. Thus, $\{a_i \mid i \in [k]\}$ is an $(n-1)$-element subset of $[n]$ not containing $n$, so it is $\{1,\ldots, n-1\}$ as claimed.

By conjugating, we may assume without loss of generality that $a_i = i$. \Cref{TSnumbers} now says $g_\sigma = \sigma$. At this stage, we know $y_i(n) = i$ and want to show that $y_i = (i \; n)$. This will be accomplished in two steps:

\begin{enumerate}
    \item Show that $y_i(i) = n$
    \item Show that if $j \not \in \{i,n\}$, then $y_i(j) = j$
\end{enumerate}

\vspace{.1cm}
\noindent
\textit{Step 1.} Suppose $y_i(i)  = j\neq n$. Let $k \not \in \{n,i,j\}$, and consider the element $g_{(j \: k)} = (j \: k)$. By total symmetry, $$(j \: k) y_i (j \: k) = g_{(j \: k)}y_ig_{(j \: k)}\inv = y_i.$$ But the left hand side of this equation sends $i \to k$, while the right hand side sends $i \to j$. Hence, $y_i(i) = n$. 

\vspace{.1cm}
\noindent
\textit{Step 2.} Assume that $y_i$ does not fix some $j \not \in \{i,n\}$, that is $y_i(j) = k$ for $j,k \not \in \{i,n\}$. We will use the same trick: Because $n \geq 5$, there is an $m \not \in \{i,n,j,k\}$. Just as before, $$(m \: j) y_i (m \: j) = g_{(m \: j)}y_ig_{(m \: j)}\inv = y_i.$$ But the left hand side takes $m \to k$, while the right hand side takes $j \to k$. Then $y_i = (i \: n)$ as required. Note that this step fails for the totally symmetric set $\{(1 \: 2)(3\:4),(1\:3)(2\:4),(1\:4)(2\:3)\} \subset S_4$.

\subsection{The exceptional case: \boldmath{$n = 4$}}

We will check $3$ element subsets of conjugacy classes directly. Elements of a totally symmetric set are conjugate, so there are four cycle types to consider:

\begin{enumerate}
    \item $(\ast \: \ast)(\ast \: \ast)$
    \item $(\ast \: \ast)$
    \item $(\ast \: \ast \: \ast)$
    \item $(\ast \: \ast \: \ast \: \ast)$
\end{enumerate}
\vspace{.3cm}

\noindent\textit{Case 1: $(\ast \: \ast)(\ast \: \ast)$.} We claim the conjugacy class of cycle type $(\ast \: \ast)(\ast \: \ast)$ given by $Y = \{y_1 = (1 \: 2)(3\: 4),y_2 = (1\:3)(2\:4),y_3 = (1\:4)(2\:3)\}$ is totally symmetric. Indeed, $g_{(1 \; 2)} = (2 \: 3)$, $g_{(1\: 3)} = (1 \: 3)$ and $g_{(2\: 3)} = (2\; 3)$ realize all three transpositions of $Y$.

\vspace{.3cm}

We are then left to consider totally symmetric sets $Y = \{c_1,c_2,c_3\}$ of cycles. By total symmetry, the intersection pattern of subsets of $Y$ must be the same: $|c_1\cap c_2| = |c_2\cap c_3| = |c_1\cap c_3|$ (here we think of cycles as subsets of $[4] = \{1,2,3,4\}$). This is a consequence of the more general fact\footnote{In the language of \cite{Caplinger-Salter}, the set of intersections $c_i\cap c_j$ form a totally symmetric set under the action of $S_4$ on two elements subsets of $[4]$. This is a more general notion of total symmetry than used in this paper.} that $g_\sigma (c_i \cap c_j) = g_\sigma c_i \cap g_\sigma c_j = c_{\sigma(i)} \cap c_{\sigma(j)}$, where again we think of the cycles as subsets of $[4]$.

\vspace{.3cm} 

\noindent\textit{Case 2: $(\ast \: \ast)$.} There are no three element sets of transpositions in $S_4$ which do not intersect. Then $c_1$ and $c_2$ share a single element. By the previous discussion, $c_3$ must intersect both $c_1$ and $c_2$. Then either $c_3$ contains the point $c_1 \cap c_2$ or it contains the points $c_1 \setminus (c_1\cap c_2)$ and $c_2 \setminus (c_1\cap c_2)$. In the first case, $Y$ is conjugate to $X_4$. In the second, $Y$ is conjugate to $\{(1\:2), (1\:3), (2\:3)\}$.

\vspace{.3cm}

\noindent\textit{Case 3: $(\ast \: \ast \: \ast)$.} If $c_1 = c_2$ as subsets of $[4]$, then $c_3$ is a $3$-cycle on the same three elements. But there are only two distinct 3-cycles in $S_3$. Then $c_1 \neq c_3$ as subsets of $[4]$, and $|c_1\cap c_2| = 2$. Then $c_3$ intersects $c_1\cap c_2$ in exactly one point---if it contained $c_1\cap c_2$, then the third point (which lies in either $c_1$ or $c_2$) would break the symmetry of intersection patterns. Let $p$ be the unique point in $c_1 \cap c_2 \cap c_3$, and let $g_{(1\:2)}$ realize the permutation $(1\: 2)$ on the totally symmetric set $Y = \{c_1,c_2,c_3\}$. The centralizer of a 3-cycle in $S_4$ is the group generated by that 3-cycle. Then $g_{(1\: 2)}$ is a power of $c_3$, but also fixes $p$. Then $g_{(1\: 2)} = e$, which does not realize $(1\: 2)$.

\vspace{.3cm}

\noindent \textit{Case 4: $(\ast \: \ast \: \ast \: \ast)$.} Any group element $c$ and its inverse cannot appear in a three element totally symmetric set---one cannot move $c$ by conjugation while fixing $c\inv$. There are only three inverse pairs of four-cycles in $S_4$, so $c_1 = (1 \: 2 \: 3 \: 4) ^{\pm 1}$, $c_2 = (1 \: 2 \: 4 \: 3)^{\pm 1}$, and $c_3 = (1 \: 3 \: 2 \: 4)^{\pm 1}$. Without loss of generality, assume $c_1 = (1 \: 2 \: 3 \: 4)$. We will show $c_2 = (1 \: 2 \: 4 \: 3)$ is impossible---the case $c_2 = (1 \: 2 \: 4 \: 3)^{-1 }$ is nearly identical. 

As before, the element $g_{(2 \: 3)}$ realizing the permutation $(2 \: 3)$ must be a power of $c_1$. Denote this power $p \in \{0,1,2,3\}$. If $p = 3$, then $(c_1^3)^3 = c_1$ also realizes the permutation $(2 \: 3)$, so we need only check $p = 1$ and $p = 2$. If $p = 1$, we compute $$c_3 = c_1 c_2 c_1\inv = (2 \: 3 \: 1 \: 4) \qquad \text{and} \qquad c_2 = c_1 c_3 c_1\inv = (3 \: 4 \: 2 \: 1) \neq c_2.$$ Then $p = 2$, and $$c_3 = c_1^2 c_2 c_1^{-2} = (3 \: 4 \: 2 \: 1) = c_2\inv.$$ But $Y$ cannot contain both $c_2$ and $c_2\inv$.

\bibliographystyle{alpha}
\bibliography{bib}

\begin{thebibliography}{CKLP20}

\bibitem[CK20]{Caplinger-Kordek}
Noah Caplinger and Kevin Kordek.
\newblock Small quotients of braid groups.
\newblock {P}reprint, https://arxiv.org/pdf/2009.10139.pdf, 2020.

\bibitem[CKLP20]{Finite_Quotients}
Alice Chudnovsky, Kevin Kordek, Qiao Li, and Caleb Partin.
\newblock Finite quotients of braid groups.
\newblock {\em Geom. Dedicata}, 207:409--416, 2020.

\bibitem[CM20]{Chen-Mukherjea}
Lei Chen and Aru Mukherjea.
\newblock From braid groups to mapping class groups.
\newblock {P}reprint, https://arxiv.org/pdf/2011.13020.pdf, 2020.

\bibitem[CS22]{Caplinger-Salter}
Noah Caplinger and Nick Salter.
\newblock Totally symmetric sets in the general linear group.
\newblock {P}reprint, https://arxiv.org/pdf/2202.10216.pdf, 2022.

\bibitem[DM12]{dixon2012permutation}
J.D. Dixon and B.~Mortimer.
\newblock {\em Permutation Groups}.
\newblock Graduate Texts in Mathematics. Springer New York, 2012.

\bibitem[H\"95]{Holder}
Otto H\"{o}lder.
\newblock Bildung zusammengesetzter {G}ruppen.
\newblock {\em Math. Ann.}, 46(3):321--422, 1895.

\bibitem[KM19]{Kordek-Margalit}
Kevin Kordek and Dan Margalit.
\newblock Homomorphisms of commutator subgroups of braid groups.
\newblock {P}reprint, https://arxiv.org/pdf/1910.06941.pdf, 2019.

\bibitem[SV20]{Nancy-Yvon}
Nancy Scherich and Yvon Verberne.
\newblock Finite image homomorphisms of the braid group and its
  generalizations.
\newblock {P}reprint, https://arxiv.org/pdf/2012.01378.pdf, 2020.

\end{thebibliography}

\end{document}